\documentclass[10pt,a4paper]{article}
\usepackage{amsmath,amsfonts,amsthm,amssymb}
\usepackage{amsmath,amsfonts,amsthm}
\usepackage{amssymb}
\usepackage[top=3cm,left=3cm,right=2.5cm,bottom=2.5cm]{geometry}

\usepackage{graphicx,color}
\usepackage[usenames,dvipsnames,svgnames,table]{xcolor}
\usepackage[colorlinks = true, linkcolor = blue, urlcolor  = blue, citecolor = violet]{hyperref}

\numberwithin{equation}{section}

\newtheorem{corollary}{Corollary}[section]

\newtheorem{lemma}[corollary]{Lemma}
\newtheorem{theorem}[corollary]{Theorem}

\theoremstyle{definition}
\newtheorem{remark}[corollary]{Remark}

\newcommand{\numset}[1]{\mathbb{#1}}

	\newcommand{\nn}{\numset{N}}

	\newcommand{\one}{\boldsymbol{1}}
	
		\newcommand{\Exp}[1]{\mathrm{e}^{#1}}
	

	\makeatletter
	\providecommand*{\diff}%
	{\@ifnextchar^{\DIfF}{\DIfF^{}}}
	\def\DIfF^#1{%
	\mathop{\mathrm{\mathstrut d}}%
	\nolimits^{#1}\gobblespace}
	\def\gobblespace{%
	\futurelet\diffarg\opspace}
	\def\opspace{%
	\let\DiffSpace\!%
	\ifx\diffarg(%
	\let\DiffSpace\relax
	\else
	\ifx\diffarg%
	\let\DiffSpace\relax
	\else
	\ifx\diffarg\{%
	\let\DiffSpace\relax
	\fi\fi\fi\DiffSpace}

	\renewcommand{\d}{\diff}

\title
  {A gapped generalization of \\ Kingman's subadditive ergodic~theorem}

\author
  {Renaud Raqu\'{e}pas}
%
\date{\today}

\linespread{1.1}

\begin{document}

\maketitle

{\small
\begin{center}
  New York University \\
  Courant Institute of Mathematical Sciences
  \\
  251 Mercer Street, New York \\
	NY 10012, United States \\
\end{center}
}

\begin{abstract}
  We state and prove a generalization of Kingman's ergodic theorem on a measure-preserving dynamical system~$(X,\mathcal{F},\mu,T)$ where the $\mu$-almost sure subadditivity condition
  \[
    f_{n+m} \leq f_n + f_m \circ T^{n}
  \]
  is relaxed to a $\mu$-almost sure, ``gapped'', almost subadditivity condition of the form
  \[
    f_{n+\sigma_m+m} \leq f_n +\rho_n + f_m \circ T^{n+\sigma_n}
  \]
  for some nonnegative~$\rho_n \in L^1(\d\mu)$ and $\sigma_n \in \nn \cup \{0\}$ that are suitably sublinear in~$n$. This generalization has a first application to the existence of specific relative entropies for suitably decoupled measures on one-sided shifts.\\

  \noindent\textbf{MSC2020:} \emph{Primary} 37A30, 60F99; \emph{Secondary}  39B62.

  \noindent\textbf{Keywords:} subadditivity, gap, decoupling, Fekete's lemma, almost sure convergence
\end{abstract}


\newcommand{\bX}{\mathbf{X}}
\newcommand{\bY}{\mathbf{Y}}

\section{Introduction}

In this short note, we consider a general measurable space~$(X, \mathcal{F})$, on which a probability measure~$\mu$ is invariant for some measurable transformation~$T : X \to X$, and state and prove an ergodic theorem for sequences~$(f_n)_{n\in\nn}$ of measurable functions on~$(X,\mathcal{F})$ satisfying what we will call a ``gapped almost subadditivity condition''.
It is a  generalization of Kingman's subadditive ergodic theorem~\cite{Ki} that\,---\,to the author's knowledge\,---\,has not been pointed out in the literature. The basic theorem, Theorem~\ref{thm:as-K}, yields $\mu$-almost sure convergence of $\tfrac 1n f_n$ as $n\to\infty$, but may be strengthened to convergence in~$L^1(\d\mu)$ under stronger assumptions using Theorem~\ref{thm:L1-K}.

For the author of the present note, the interest in such a generalization stemmed from the study of estimators of specific relative entropies for pairs of shift-invariant Borel probability measures on the space~$\mathcal{A}^\nn$ of sequences $x = (x_k)_{k\in\nn}$ with values in some finite alphabet~$\mathcal{A}$~\cite{CDEJR,CDEJRb}. The focus there is on measures satisfying decoupling-type conditions, already exploited in works such as~\cite{Pf} by Ch.-\'{E}.~Pfister as well as~\cite{CJPS19} by N.~Cuneo, V.~Jak\v{s}i\'{c}, C.-A.~Pillet and A.~Shirikyan on the theory of large deviations.
Such conditions become gapped subadditivity-type conditions upon taking logarithms. As a concrete example, consider the following upper-decoupling property for a probability measure~$\mathbb{Q}$ on~$\mathcal{A}^\nn$:
\begin{quote}
	\noindent There exist $o(n)$-sequences $(c_n)_{n\in\nn}$ and $(\tau_n)_{n\in\nn}$ such that
	\[
		\mathbb{Q}\{ x : x_1^n = a, x_{n+\tau_n+1}^{n+\tau_n+m} =  b\}
			\leq \Exp{c_n} \mathbb{Q}\{x : x_1^n = a\}\mathbb{Q}\{x : x_{n+\tau_n+1}^{n+\tau_n+m} =  b\}
	\]
	for all~$a \in \mathcal{A}^n$, $n\in\nn$, $b \in \mathcal{A}^m$ and $m\in\nn$.
\end{quote}
Indeed, the functions $f_n: x \mapsto \log \mathbb{Q}_n(x_1^n)$ then define a gapped, almost subadditive sequence in the sense of the present note. Hence, by corollary of Theorems~\ref{thm:as-K} and~\ref{thm:L1-K} (and the Shannon--McMillan--Breiman theorem), we have the following result. We refer to~\cite{CJPS19,CDEJR,CDEJRb} for thorough discussions of the range of applicability of the upper decoupling condition and its generalizations, including adaptations to countably infinite alphabets. \\

\noindent\textbf{Corollary} (Special case of~\cite[\S{5}]{CDEJR}).
	{\em
	Let $\mathbb{P}$ and~$\mathbb{Q}$ be shift-invariant probability measures on~$\mathcal{A}^\nn$. If $\mathbb{Q}$ satisfies the above upper-decoupling condition, then the \textnormal{(}possibly infinite\textnormal{)} $\mathbb{P}$-almost sure limit 
	\[
		h_{\mathbb{P},\mathbb{Q}}(x) := \lim_{n\to\infty} \frac 1n \log \mathbb{Q}_n\{y : y_1^n = x_1^n\}
	\]
	exists, and so does the \textnormal{(}possibly infinite\textnormal{)} specific relative entropy of~$\mathbb{P}$ with respect to~$\mathbb{Q}$.
	}

\section{Gapped subadditivity and Fekete's lemma}

The main difference with previous works that the author is aware of is that the subadditivity-type condition under consideration for the ergodic theorem below allows for what we will call ``gaps''. Both as an illustration and as a technical building block, we note the following generalization of Fekete's lemma.

\begin{lemma}
\label{lem:gapped-F}
	Let $(F_n)_{n\in\nn}$ be a sequence\footnote{We choose the convention that~$\nn = \{1,2,3,\dotsc\}$.} in~$[-\infty,\infty)$ and suppose that
  there exist $o(n)$-sequences $(\sigma_n)_{n\in\nn}$ and $(R_n)_{n\in\nn}$ of nonnegative integers such that
	\begin{equation}
  \label{eq:gapped-for-Fekete}
    F_{n+\sigma_n+m} \leq F_n + R_n + F_m,
  \end{equation}
	for all $n,m \in \nn$. Then, with
  \begin{equation}
  \label{eq:liminf-Fekete}
    F := \inf_{n\in\nn}\frac{F_n + R_n}{n + \sigma_n}
  \end{equation}
  understood in~$[-\infty,\infty)$, we have
	\[
		\lim_{n\to\infty} \frac{F_n}{n} = F.
	\]
\end{lemma}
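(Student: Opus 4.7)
My approach is to adapt the classical proof of Fekete's lemma, accounting for the gap $\sigma_n$ and the error $R_n$. I split the argument into the two inequalities $\limsup_{n\to\infty} F_n/n \leq F$ and $\liminf_{n\to\infty} F_n/n \geq F$.

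For the upper bound I fix $k \in \nn$ with $F_k$ finite, set $K := k + \sigma_k$, and iterate~\eqref{eq:gapped-for-Fekete} with first argument~$k$ to obtain, by induction on $q$,
\[
F_{qK + r} \leq q(F_k + R_k) + F_r
\]
for every $q \in \nn$ and every $r \in \{1, \dotsc, K\}$. Writing each large~$n$ as $n = q(n)\, K + r(n)$ with $r(n)$ in this bounded range, one has $q(n)/n \to 1/K$ and $F_{r(n)}/n \to 0$ (using that $F_r \in [-\infty,\infty)$ is bounded above on the finite set of remainders), so dividing the iterated bound by $n$ and taking the limit superior gives $\limsup_{n} F_n/n \leq (F_k + R_k)/(k+\sigma_k)$. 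Taking the infimum over~$k$ yields $\limsup_{n} F_n/n \leq F$. The situation where some $F_k$ equals~$-\infty$ is handled separately: iterating then forces $F_n = -\infty$ for all sufficiently large~$n$, and the conclusion $\lim F_n/n = -\infty = F$ is automatic.

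For the lower bound, the definition of $F$ as an infimum directly gives $F_n + R_n \geq F(n + \sigma_n)$ for every~$n$. When $F \in \rr$, rearranging yields
\[
\frac{F_n}{n} \geq F + \frac{F\sigma_n - R_n}{n},
\]
and the error term on the right tends to~$0$ because $\sigma_n, R_n = o(n)$; this delivers $\liminf_{n} F_n/n \geq F$. When $F = -\infty$, no lower bound is needed and the upper bound already forces $\lim F_n/n = -\infty$.

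I do not anticipate any substantial obstacle; the only delicate point is the uniform bookkeeping around summands that may equal~$-\infty$, which is purely a matter of reading sums and quotients in the natural way on~$[-\infty,\infty)$.
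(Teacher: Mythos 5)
Your proposal is correct and follows essentially the same route as the paper's proof: the same decomposition $n = q(k+\sigma_k) + r$ with remainder in $\{1,\dotsc,k+\sigma_k\}$, the same iteration of~\eqref{eq:gapped-for-Fekete} with fixed first argument, and the same observation that the lower bound follows from the definition of the infimum together with $\sigma_n, R_n = o(n)$. The only cosmetic difference is that the paper quantifies over an arbitrary $F' > F$ instead of taking the infimum over $k$ at the end, which lets it avoid your (correctly handled) case split on $F_k = -\infty$.
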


\begin{proof}
  Since the assumptions on~$(R_n)_{n\in\nn}$ and $(\sigma_n)_{n\in\nn}$ and the definition of~$F$ imply that
  \[
    \liminf_{n\to\infty} \frac{F_n}{n}
    = \liminf_{n\to\infty} \frac{F_n + R_n}{n+\sigma_n}
    \geq \inf_{n\in\nn} \frac{F_n + R_n}{n+\sigma_n} = F,
  \]
  it suffices to show that
  \begin{equation}
  \label{eq:to-show-Fekete}
      \limsup_{n\to\infty} \frac{F_n}{n} \leq F.
  \end{equation}
  To this end, let $F' > F$ be arbitrary. By definition of the infimum under consideration, there exists~$r\in\nn$ such that
  \[
    {F_r + R_r} \leq F'({r + \sigma_r}).
  \]
  Now, note that an arbitrary number $n \in \nn$, may be written as
  $
    n = k_n(r+\sigma_r) + q_n
  $
  for some $k_n \in \nn$ and $q_n \in \{1, 2, \dotsc, r + \sigma_r\}$. Hence, using~\eqref{eq:gapped-for-Fekete} repeatedly,
  \begin{align*}
    F_n &\leq k_n(F_r + R_r) + F_{q_n}  \\
      &\leq k_n F' (r+\sigma_r) + \max\{F_{q,+} : q \in \{1, 2, \dotsc, r + \sigma_r\} \}.
  \end{align*}
  Therefore,
  \begin{align*}
    \frac{F_n}{n}
      &\leq \frac{k_n(r+\sigma_r)}{k_n(r+\sigma_r) + q_n} F'  + \frac{\max\{F_{q,+} : q \in \{1, 2, \dotsc, r + \sigma_r\} \}}{n}.
  \end{align*}
  Because $k_n \to \infty$ as $n\to\infty$ and because $q_n \leq r + \sigma_r$ independently of~$n$, we may deduce that
  \begin{align*}
    \limsup_{n\to\infty}\frac{F_n}{n}
      &\leq F'.
  \end{align*}
  Since $F' > F$ was arbitrary, we conclude that the inequality~\eqref{eq:to-show-Fekete} indeed holds.
\end{proof}

In what follows, the terms ``gap'' and ``gapped'' will refer to the possibly nonzero integers~$\sigma_m$ that appear in conditions such as~\eqref{eq:gapped-for-Fekete}\,---\,this has \emph{nothing} to do with the spectral-theoretic notion of ``gap''. In this terminology, the usual statement of Fekete's lemma is the gapless case.

\section{Main result}

Several decades after M.~Fekete's lemma, J.F.C.~Kingman's ergodic theorem highlighted the role of (gapless) subadditivity in ergodic theory and dynamical systems~\cite{Ki}. Starting in the 1980s, new proofs of the original result paved the way for several key (gapless) generalizations, most notably by Y.~Derriennic~\cite{De} and by K.~Sch\"urger~\cite{Sc}.
Our main result is a gapped version of such a generalization of J.F.C.~Kingman's theorem.

\begin{theorem}
\label{thm:as-K}
	Let $(f_n)_{n\in\nn}$ be a sequence of measurable functions with positive parts~$f_{n,+} \in L^1(\d\mu)$ for all~$n\in\nn$.
	Suppose in addition that there exists an $o(n)$-sequence $(\sigma_n)_{n\in\nn}$ of nonnegative integers with $\sigma_1 = 0$ and a sequence $(\rho_n)_{n\in\nn}$ of nonnegative functions in~$L^1(\d\mu)$ such that
	\begin{equation}
	\label{eq:isu}
	\begin{split}
		f_{n+\sigma_n+m} &\leq f_n + \rho_n + f_m \circ T^{n+\sigma_n}
	\end{split}
	\end{equation}
	$\mu$-almost surely for all $n,m \in \nn$, and such that
	\begin{equation}
	\label{eq:as-rho-cond}
		\lim_{n\to\infty}\frac{\rho_n}{n} = 0
	\end{equation}
	$\mu$-almost surely. Then, the limit
	\begin{equation}
	\label{eq:as-K}
		f = \lim_{n\to\infty} \frac{f_n}{n}
	\end{equation}
	exists $\mu$-almost surely.
\end{theorem}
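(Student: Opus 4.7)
My plan is to adapt the Katznelson--Weiss / Steele approach to Kingman's theorem, combining a truncation step with a stopping-time block decomposition that accommodates both the gap $\sigma_n$ and the additive error $\rho_n$. The gapped Fekete lemma is used indirectly, as a guide for the ``right'' block size $r + \sigma_r$.

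First, I would truncate: set $f_n^{(K)} := \max(f_n,-Kn)$. The hypothesis~\eqref{eq:isu} is preserved with the same $\sigma_n, \rho_n$, and now $f_n^{(K)} \in L^1(\d\mu)$ for every $n$. Because the existence of $\lim_n f_n^{(K)}/n$ $\mu$-a.s.\ for every $K$ implies the existence of $\lim_n f_n/n$ in $[-\infty,\infty)$ $\mu$-a.s., I may assume throughout that $f_n \in L^1(\d\mu)$. Setting $\bar f := \limsup_n f_n/n$ and $\underline f := \liminf_n f_n/n$, I specialize~\eqref{eq:isu} at $n=1$ (using $\sigma_1 = 0$) to get $f_{1+m} \leq f_1 + \rho_1 + f_m \circ T$, which yields $\bar f \leq \bar f \circ T$ and $\underline f \leq \underline f \circ T$ $\mu$-a.s.; the $T$-invariance of $\mu$ then upgrades both to equalities. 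After disposing of the trivial case $\underline f = +\infty$, the proof reduces to showing $\bar f \leq \underline f$ $\mu$-a.s.

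For the upper bound on $\bar f$, fix $r \in \nn$, let $s := r + \sigma_r$, and iterate~\eqref{eq:isu} to obtain
\[
  f_{ks + m} \leq \sum_{j=0}^{k-1} (f_r + \rho_r) \circ T^{js} + f_m \circ T^{ks} \qquad (k \in \nn,\ 1 \leq m \leq s).
\]
Dividing by $n = ks + m$, the Birkhoff theorem for $T^s$ applied to $f_r + \rho_r \in L^1(\d\mu)$, together with the Borel--Cantelli estimate $(\max_{1 \leq m \leq s}|f_m|) \circ T^{ks}/n \to 0$ $\mu$-a.s., gives $\bar f \leq (r+\sigma_r)^{-1}\,\mathbb{E}[f_r + \rho_r \mid \mathcal{I}_s]$ $\mu$-a.s., where $\mathcal{I}_s$ is the $\sigma$-algebra of $T^s$-invariant sets. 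One might be tempted to conclude by applying Lemma~\ref{lem:gapped-F} to $\int f_n \,\d\mu$, but this route is blocked: the hypothesis only guarantees $\rho_n/n \to 0$ $\mu$-a.s., not in $L^1$, so $\int \rho_n \,\d\mu/n$ need not vanish. This is the structural obstacle that forces a pointwise argument.

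I therefore expect the decisive step to be a Katznelson--Weiss stopping-time construction. Fix $\epsilon > 0$ and define the $\mu$-a.s.\ finite stopping time $N(x) := \min\{n : f_n(x) \leq n(\underline f(x) + \epsilon)\}$. Given $\delta > 0$, choose $K$ with $\mu\{N > K\} < \delta$, and by Egorov choose $N_0$ and a set $E$ with $\mu(E) > 1 - \delta$ on which $\rho_n(x) \leq \epsilon n$ for all $n \geq N_0$. Then tile $\{1,\ldots,n\}$ inductively: at a ``good'' position $y_i$ (with $N_0 \leq N(y_i) \leq K$ and $y_i \in E$) take a block of length $N(y_i) + \sigma_{N(y_i)}$, which by~\eqref{eq:isu} contributes at most $N(y_i)(\underline f(y_i) + 2\epsilon)$ to $f_n(x)$; at a ``bad'' position take a single step contributing $\leq f_{1,+}(y_i) + \rho_1(y_i)$. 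Applying Birkhoff for $T$ to the (bounded-length) contribution functions and to the indicator of the bad set yields $\bar f \leq \underline f + O(\epsilon + \delta)$ $\mu$-a.s., and sending $\epsilon, \delta \to 0$ concludes. The main obstacle will be orchestrating this tiling so that the random block lengths, the gaps $\sigma_{N(y_i)}$, and the errors $\rho_{N(y_i)}(y_i)$ all cooperate with Birkhoff's theorem despite the merely a.s.\ (not $L^1$) decay of $\rho_n/n$ --- Egorov is the key device that sidesteps this lack of integrability.
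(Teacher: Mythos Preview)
Your plan is close to the paper's own: both establish $T$-invariance of $\underline f$ from $\sigma_1=0$ and then run a Steele--Katznelson--Weiss block decomposition into ``good'' and ``bad'' intervals. The paper packages things slightly differently, folding the control on $\rho$ and $\sigma$ directly into the definition of ``good'' rather than invoking Egorov: it fixes a large $r$, restricts block lengths to multiples of~$r$, and calls a point good at level~$K$ if $\tfrac{f_{kr}+\rho_{kr}}{kr+\sigma_{kr}} \leq \max\{\underline f,-\epsilon^{-1}\}+\epsilon$ for some $k\leq K$; bad blocks then have fixed length $r+\sigma_r$ rather than~$1$. This is tidier but not essentially different from your route.

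There is, however, one genuine wrinkle in your sketch. Your bad set is $\{N<N_0\}\cup\{N>K\}\cup E^c$, and while the last two pieces have measure below~$\delta$, nothing controls $\mu\{N<N_0\}$: the first time $f_n \leq n(\underline f+\epsilon)$ may well occur at $n=1$ on a set of large measure, so the Birkhoff average of $\one_{\text{bad}}(f_{1,+}+\rho_1)$ need not be $O(\delta)$. The fix is immediate and actually simplifies your argument: redefine
\[
  N(x):=\min\bigl\{n : f_n(x)+\rho_n(x)\leq n(\underline f(x)+\epsilon)\ \text{and}\ \sigma_n\leq\epsilon n\bigr\},
\]
which is still $\mu$-a.s.\ finite by the hypotheses, absorbs the Egorov step entirely, and makes the bad set exactly $\{N>K\}$, decreasing to a null set as $K\to\infty$. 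With that adjustment your tiling goes through, and your initial truncation $\max(f_n,-Kn)$ plays the same role as the paper's cutoff $\max\{f,-\epsilon^{-1}\}$.
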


Before we proceed with the proof of this almost sure convergence\,---\,in steps mostly inspired by K.~Sch\"urger's sequence of arguments in~\cite[\S{2}]{Sc}, but also to some extent by J.M.~Steele~\cite[\S{2}]{St} and by A.~Avila and J.~Bochi~\cite{AB}\,---, we briefly comment on its hypotheses:
  it is condition~\eqref{eq:isu} that we call a gapped almost subadditivity condition,
  and
  the condition~$\sigma_n = o(n)$ controls the size of the gaps in this gapped condition.
Let us now proceed.

\begin{proof}
	We set
	\[
		f := \liminf_{n\to\infty} \frac{f_n}{n}
	\]
	and want to show that, $\mu$-almost surely, this limit inferior coincides with the corresponding limit superior.
	To do so, we introduce an arbitrarily small parameter $\epsilon > 0$.

	\begin{description}
    \item[Step 1] Let us first show that, as a consequence of~\eqref{eq:isu} with $n=1$ and $\sigma_1 = 0$, we have the identity
    \begin{equation}
    \label{eq:f-T-inv}
      f \circ T = f
    \end{equation}
		in the almost sure sense.
    First, note that taking~$m\to\infty$ there, we find
    \begin{align*}
      f &\leq \liminf_{m\to\infty} \left( \frac{f_{1,+}}{m} + \frac{\rho_1}{m} + \frac{f_{m} \circ T}{m}\right) \\
        &= f \circ T
    \end{align*}
    in the almost sure sense.
    But then, we have the inclusion $\{x : f(Tx) \geq y\} \supseteq \{x : f(x) \geq y\}$ for all~$y \in [-\infty,\infty)$, and since
    \[
      \mu\{x : f(Tx) \geq y\} = (\mu \circ T^{-1})\{x : f(x) \geq y\} = \mu\{x : f(x) \geq y\}
    \]
    by $T$-invariance of the measure~$\mu$,
    we conclude that
    \[
      \mu(\{x : f(Tx) \geq y\} \triangle \{x : f(x) \geq y\}) = 0
    \]
    for all~$y \in [-\infty,\infty)$, and therefore that~\eqref{eq:f-T-inv} indeed holds.

		\item[Step 2] Let us show that, for $r\in\nn$ large enough, almost surely, there exists~$k$ such that
		\begin{equation}
			\frac{f_{kr} + \rho_{kr}}{kr+\sigma_{kr}} \leq \max\left\{f,-\epsilon^{-1}\right\} + \epsilon.
		\end{equation}
		Given any $n \in \nn$, there exists a natural number~$k_n$ such that
		\begin{equation}
		\label{eq:choice-of-nk}
			(k_n-1)r \leq n + \sigma_n < k_n r,
		\end{equation}
		and then, by~\eqref{eq:isu}, we have the almost sure inequality
		\begin{align}
    \label{eq:f-knr}
			f_{k_n r}
				&\leq f_{n} + \rho_n + f_{k_n r - n - \sigma_n} \circ T^{n+\sigma_n}
		\end{align}
    Note that
    \begin{align*}
      f_{k_n r - n - \sigma_n} \circ T^{n+\sigma_n} \leq \sum_{q=1}^r f_{q,+} \circ  T ^{n + \sigma_n} .
    \end{align*}
	by~\eqref{eq:choice-of-nk}.
		Hence, dividing~\eqref{eq:f-knr} by~$n$ and taking $n \to\infty$,
		\begin{align*}
			\liminf_{n\to\infty}\frac{f_{k_n r}}{n}
			&\leq
			\liminf_{n\to\infty}\left( \frac{f_n}{n} + \frac{\rho_n}{n}
				+ \sum_{q=1}^r \frac{f_{q,+} \circ  T ^{n + \sigma_n}}{n}
				\right),
		\end{align*}
		almost surely. In view of~\eqref{eq:as-rho-cond},~\eqref{eq:choice-of-nk}, and a standard consequence of Birkhoff's theorem\footnote{See \emph{e.g.}~Lemma~2 in~\cite{AB} for a direct proof. This lemma is to be applied to each of the finitely many integrable functions of the form~$f_{q,+}$ using the fact that $\sigma_n = o(n)$.}, we have the almost sure inequality
		\begin{align*}
			\liminf_{n\to\infty}\frac{f_{k_n r}}{k_n r}
			&\leq
			\liminf_{n\to\infty} \frac{f_n}{n}.
		\end{align*}
		Recall that the right-hand side is the definition of~$f$. Hence, for $r$ large enough, there almost surely exist infinitely many~$k$ such that
		\begin{equation*}
			\frac{f_{kr}}{kr+\sigma_{kr}} \leq \max\left\{f,-\epsilon^{-1}\right\} + \frac{\epsilon}{2}.
		\end{equation*}
		{The criterion on~$r$ can be determined as a function of $\epsilon$ and $(\sigma_r)_{r\in\nn}$ only.}
    	But, again almost surely,
		\begin{equation*}
			\frac{\rho_{kr}}{kr+\sigma_{kr}}  \leq \frac{\epsilon}{2}
		\end{equation*}
		for all~$k$ large enough in view of~\eqref{eq:as-rho-cond} and nonnegativity of~$\rho_{kr}$. This gives the desired conclusion.

		\item[Step 3] Let us show that, for $r \in \nn$ large enough,
		the sets
		\[
			D^{r,K,\epsilon} :=
			\bigcap_{k=1}^{K}
			\left\{  x :
				\frac{f_{kr}( x ) + \rho_{kr}( x )}{kr+\sigma_{kr}} > \max\left\{f( x ),-\epsilon^{-1}\right\} + \epsilon
			\right\}
		\]
		are such that
		\[
			\lim_{K\to\infty}\lim_{n\to\infty} \frac{1}{n}\sum_{j=0}^{n-1} (\one_{D^{r,K,\epsilon}}(1+f_{r,+} + \rho_{r})) \circ T^j = 0
		\]
		almost surely. To do so, we omit some indices and let
		\[
			\psi_{n,K} := \frac{1}{n}\sum_{j=0}^{n-1} (\one_{D^{K}}(1+f_{r,+} + \rho_{r})) \circ T^j.
		\]
		By Birkhoff's theorem, the limit
		\[
			\psi_{K} =
			\lim_{n\to\infty} \psi_{n,K}
		\]
		almost surely exists and
		\[
			\int \psi_{K} \d\mu = \int_{D^{K}} (1+f_{r,+} + \rho_{r}) \d\mu.
		\]
		On the other hand, Lebesgue dominated convergence guarantees that
		\[
			\int \lim_{K\to\infty} \psi_{K} \d\mu
			=
			\lim_{K\to\infty} \int \psi_{K} \d\mu.
		\]
		Therefore, because $\psi_{K}$ is nonnegative, we need only show that the right-hand side vanishes.
		But, using once again Lebesgue dominated convergence,
		\[
			\lim_{K\to\infty} \int \psi_{K} \d\mu = \lim_{K\to\infty} \int_{D^{K}} (1+f_{r,+} + \rho_{r}) \d\mu = \int_{\bigcap_{K \in \nn} D^{K}} (1+f_{r,+} + \rho_{r}) \d\mu,
		\]
		so this follows from Step~2 provided that $r$ is large enough.

		\item[Step 4] Let us show that
		\begin{align*}
			\limsup_{n\to\infty} \frac{f_n}{n}
				&\leq \max\left\{f,-\epsilon^{-1}\right\} + \epsilon.
		\end{align*}
		almost surely. To do so, fix $r$ large enough and~$x$ in a set of measure~$1$ where conclusions of Steps~1 and~3 hold, and let us construct a gapped Steele-type collection~$(I_\ell)_{\ell\in\nn}$ of ordered disjoints intervals in~$\nn$ based on the behaviour of the trajectory starting at~$x$.

		\begin{description}
			\item[Base case] Set $I_0 = \{0\}$.
			\item[Induction] Suppose that we have constructed~$I_0$ up to $I_\ell$ and let $m_\ell$ be the maximum of~$I_\ell$.
      \emph{Case~1.} If $T^{m_\ell} x  \notin D^{r,K,\epsilon}$,  let $I_{\ell+1} = [m_\ell  + 1, m_\ell  + k_{\ell+1}r + \sigma_{k_{\ell+1} r}] \cap \nn$, where $k_{\ell+1}$ is the smallest~$k \in \{1,2, \dotsc K\}$ such that
			\begin{equation}
      \label{eq:f-bound-case-1}
        \frac{f_{kr}(T^{m_\ell} x ) + \rho_{kr}(T^{m_\ell} x )}{kr+\sigma_{kr}} \leq \max\left\{f(T^{m_\ell} x ),-\epsilon^{-1}\right\} + \epsilon.
      \end{equation}
			\emph{Case~2.} If $T^{m_\ell} x  \in D^{r,K,\epsilon}$, let $I_{\ell+1} = [m_\ell  + 1, m_\ell + r + \sigma_{r}] \cap \nn$.
		\end{description}

    Given~$n \in \nn$ large enough, we may split relevant indices into two categories\footnote{Informally, $\ell \in \mathsf{G}_n$ is ``good'' because the term $f_{k_\ell r}$ appearing in the estimate below can be bounded in terms of~$f$ and~$\epsilon$ directly; $\ell \in \mathsf{B}_n$ is ``bad'' because the corresponding term $f_{r,+} $ cannot\,---\,but this is rare enough for $n \gg K \gg 1$ in view of Step~3.}
    \[
			\mathsf{G}_n = \left\{ \ell : I_\ell \subseteq [1,n - 1] \text{ and } I_\ell \text{ follows Case~1}\right\}.
		\]
    and
    \[
			\mathsf{B}_n = \left\{ \ell : I_\ell \subseteq [1,n - 1] \text{ and } I_\ell \text{ follows Case~2}\right\}.
		\]
		Now, we set $M_n := m_{\max\{\mathsf{B}_n \cup \mathsf{G}_n\}}$ and use~\eqref{eq:isu} repeatedly along these intervals to write
		\begin{equation}
    \label{eq:ub-rep}
    \begin{split}
      f_n( x )
      \leq  \sum_{\ell \in \mathsf{G}_n} (f_{k_\ell r} + \rho_{k_\ell r})(T^{m_{\ell-1}}x)
      + \sum_{\ell \in \mathsf{B}_n} (f_{r}
      + \rho_r)(T^{m_{\ell-1}} x )
      + f_{n-M_n,+} (T^{M_n}x);
    \end{split}
		\end{equation}
    see Figure~\ref{fig:gapped-steele-cut}.
    \begin{figure}
      \center
      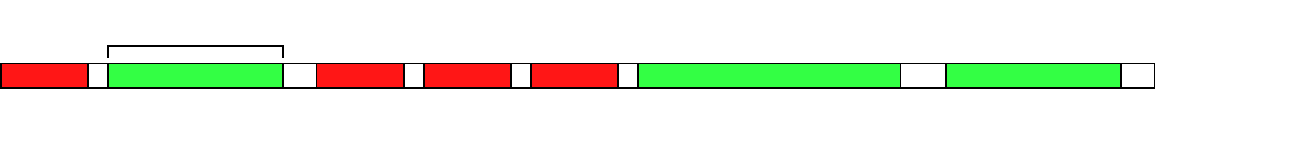
      \caption{Illustration of the gapped Steele-type collection of intervals in Step~4 of the proof of Theorem~\ref{thm:as-K}: green corresponds to Case~1 (deemed ``good'') and red corresponds to Case~2 (deemed ``bad''). The construction of these blocks along which we exploit the gapped almost subadditivity condition depends on~$x$, $r$, $K$ and $\epsilon$.}
      \label{fig:gapped-steele-cut}
    \end{figure}
    Note that
    \begin{align*}
      \sum_{\ell \in \mathsf{G}_n} (f_{k_\ell r} + \rho_{k_\ell r})(T^{m_{\ell-1}}x)
      \leq \left(\max\left\{f(x),-\epsilon^{-1}\right\} + \epsilon\right) \sum_{\ell \in \mathsf{G}_n} (k_\ell r+\sigma_{k_\ell r})
    \end{align*}
    by~\eqref{eq:f-bound-case-1} and~\eqref{eq:f-T-inv},
    \begin{align*}
      \sum_{\ell \in \mathsf{B}_n} (f_{r}
      + \rho_r)(T^{m_{\ell-1}} x ) \leq \sum_{j = 0}^n (\one_{D^{r,K,\epsilon}} (f_{r,+}
      + \rho_r))(T^{j} x )
    \end{align*}
    by nonnegativity,
    and
    \begin{align*}
      f_{n-M_n,+} (T^{M_n}x) \leq \sum_{q=1}^{Kr + \bar{\sigma}_{Kr}}f_{q,+} (T^{M_n}x),
    \end{align*}
    where $\bar{\sigma}_{Kr} := \max \{\sigma_{kr} : k = 1, 2, \dotsc, K\}$ is independent of~$n$.
		Therefore, appealing to Step~3 and Birkhoff's theorem as used in Step~2, we have the bound
		\[
			\limsup_{n\to\infty} \frac{f_n( x )}{n}
				\leq \limsup_{K\to\infty}\limsup_{n\to\infty}
				\frac{\left(\max\left\{f( x ),-\epsilon^{-1}\right\} + \epsilon\right) \sum_{\ell \in \mathsf{G}_n} (k_\ell r+\sigma_{k_\ell r})}{n}.
		\]
		Note that, by construction,
		\begin{align*}
			n &\leq \sum_{\ell \in \mathsf{G}_n} (k_\ell r + \sigma_{k_\ell r}) + \sum_{\ell \in \mathsf{B}_n} (r+\sigma_r)
      + (n - M_n)
      \\
      &\leq \sum_{\ell \in \mathsf{G}_n} (k_\ell r + \sigma_{k_\ell r}) + \sum_{j=0}^{n} (r+\sigma_r) \one_{D^{r,K,\epsilon}} \circ T^j
      + Kr + \bar{\sigma}_{Kr},
		\end{align*}
    so that
		\begin{align*}
			 n - (r+\sigma_r) \sum_{j=0}^{n} \one_{D^{r,K,\epsilon}} \circ T^j - Kr - \bar{\sigma}_{Kr} \leq \sum_{\ell \in \mathsf{G}_n} (k_\ell r + \sigma_{k_\ell r}) \leq n.
		\end{align*}
		Therefore, 
		\begin{align*}
			\limsup_{n\to\infty} \frac{f_n( x )}{n}
				&\leq \max\left\{f( x ),-\epsilon^{-1}\right\} + \epsilon
					\\
					&\qquad\quad{} + \epsilon^{-1} \limsup_{K\to\infty}\limsup_{n\to\infty} \left(\frac{Kr + \bar{\sigma}_{Kr}}{n} + \frac {r+\sigma_r}{n} \sum_{j=0}^{n} \one_{D^{r,K,\epsilon}} \circ T^j( x ) \right).
		\end{align*}
		Taking first $n\to\infty$, and then $K\to\infty$ using again the conclusion of Step~3, we obtain the desired inequality.
	\end{description}
	The fact that the conclusion of Step~4 holds $\mu$-almost surely for an arbitrarily small $\epsilon > 0$, together with the fact that~$f$ was initially defined as the corresponding limit inferior, gives the existence of the desired limit, $\mu$-almost surely.
\end{proof}

\begin{remark}
  Note that the requirement that $\sigma_1 = 0$ was only used in deriving the $\mu$-almost sure inequality $f \leq f \circ T$ and can therefore be discarded if one knows \emph{a priori} that this inequality holds $\mu$-almost surely, \emph{e.g.}\ because $f_{n+1} \leq f_n \circ T$ for all~$n \in \nn$ large enough, as is the case in~\cite{CDEJR,CDEJRb} and in the corollary presented in the Introduction, where $f_n$ is the logarithm of the $n$-th marginal of a probability measure.
  In fact, there $f_{n+1} \leq f_n$ as well and this simplifies Steps~2 and~4 of the proof.
\end{remark}

An additional assumption on the sequence~$(\rho_n)_{n\in\nn}$ of error terms then allows to prove convergence at the level of the integrals. As announced in the introduction, the theorem we are about to state and prove gives convergence in~$L^1(\d\mu)$ using the Riesz--Scheff\'{e} lemma if the functions~$f_n$ all have a definite sign and if~$f \in L^1(\d\mu)$\,---\,the latter being equivalent in this context to the requirement that~$\inf_{n\in\nn} \tfrac 1n \int f_n\d\mu > -\infty$.

\begin{theorem}
\label{thm:L1-K}
	If, in addition to the hypotheses of Theorem~\ref{thm:as-K}, the sequence $(\rho_m)_{m\in\nn}$ satisfies
	\begin{equation}
	\label{eq:L1-rho-cond}
		\lim_{n \to\infty} \frac 1n \int \rho_n \d\mu = 0,
	\end{equation}
	then
	\begin{equation}
	\label{eq:avg-K}
		\lim_{n \to\infty} \frac 1n \int f_n \d\mu = \int f \d\mu,
	\end{equation}
	understood in~$[-\infty,\infty)$.
\end{theorem}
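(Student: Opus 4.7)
\noindent The plan is to establish that $L := \lim_n \frac{1}{n}\int f_n\,\d\mu$ exists in~$[-\infty,\infty)$ and then prove separately that $\int f\,\d\mu \leq L$ and $\int f\,\d\mu \geq L$. For existence, I would integrate~\eqref{eq:isu} against $\mu$, use $T$-invariance to write the result as $F_{n+\sigma_n+m} \leq F_n + R_n + F_m$ with $F_n := \int f_n\,\d\mu$ and $R_n := \int \rho_n\,\d\mu$, and invoke the gapped Fekete Lemma~\ref{lem:gapped-F}; note that $(R_n)_{n\in\nn}$ is $o(n)$ by the additional hypothesis~\eqref{eq:L1-rho-cond}.

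For the upper bound, I would iterate~\eqref{eq:isu} with $k$ fixed to obtain, for $n = j(k+\sigma_k) + m'$ with $1 \leq m' \leq k+\sigma_k$, the pointwise estimate
\[
\frac{f_n}{n} \leq \frac{1}{n}\sum_{i=0}^{j-1}(f_k + \rho_k)\circ T^{i(k+\sigma_k)} + \frac{f_{m',+}\circ T^{j(k+\sigma_k)}}{n}.
\]
Taking $n\to\infty$, the extended form of Birkhoff's ergodic theorem applied to the $\mu$-preserving transformation $T^{k+\sigma_k}$ (valid here because $(f_k+\rho_k)_+$ is integrable) together with the standard vanishing-of-shifted-$L^1$-functions argument already used in Step~2 of the proof of Theorem~\ref{thm:as-K}, majorizes $f$ almost surely by a function whose integral equals $(F_k + R_k)/(k+\sigma_k)$. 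Integrating this pointwise bound and taking the infimum over $k\in\nn$ gives $\int f\,\d\mu \leq L$.

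For the lower bound, I would apply the same iteration but this time read off an estimate on the positive part:
\[
\frac{f_{n,+}}{n} \leq \frac{1}{n}\sum_{i=0}^{j-1}(f_{k,+} + \rho_k)\circ T^{i(k+\sigma_k)} + \frac{f_{m',+}\circ T^{j(k+\sigma_k)}}{n}.
\]
Since $f_{k,+} + \rho_k$ is genuinely in $L^1(\d\mu)$, Birkhoff's theorem now delivers $L^1(\d\mu)$-convergence of the right-hand side as $n\to\infty$, so that it is a uniformly integrable family; dominated by it, $\{f_{n,+}/n\}_{n\in\nn}$ is uniformly integrable too. Vitali's convergence theorem together with the almost sure convergence $f_{n,+}/n \to f_+$ inherited from Theorem~\ref{thm:as-K} then give $\int f_{n,+}/n\,\d\mu \to \int f_+\,\d\mu$, whence $\int f_{n,-}/n\,\d\mu \to \int f_+\,\d\mu - L$. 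Fatou's lemma applied to the nonnegative sequence $f_{n,-}/n \to f_-$ yields $\int f_-\,\d\mu \leq \int f_+\,\d\mu - L$, which rearranges to $L \leq \int f\,\d\mu$ (the edge case $\int f_-\,\d\mu = +\infty$ forces both sides to equal $-\infty$). The main obstacle is this uniform integrability step, which crucially uses that Birkhoff's theorem delivers $L^1$-convergence, not merely almost sure convergence, when applied to the genuinely integrable function $f_{k,+} + \rho_k$.
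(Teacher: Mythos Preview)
Your proposal is correct and takes a genuinely different route from the paper. Both begin identically, integrating~\eqref{eq:isu} and applying Lemma~\ref{lem:gapped-F} to obtain existence of $L$. From there the arguments diverge. The paper introduces an $\epsilon$-cutoff $\max\{f_n,-n\epsilon^{-1}\}$, uses Fatou's lemma (with integrable lower bound $-\epsilon^{-1}$) for the inequality $L \geq \int f\,\d\mu$, and for $L \leq \int f\,\d\mu$ combines dominated convergence on the truncated negative parts with a re-entry into Step~4 of the proof of Theorem~\ref{thm:as-K} (the Steele-type interval construction) applied to the positive parts. You instead obtain $\int f\,\d\mu \leq L$ by a direct Birkhoff-average bound $f \leq g_k/(k+\sigma_k)$ with $\int g_k\,\d\mu = F_k + R_k$, and obtain $L \leq \int f\,\d\mu$ by proving uniform integrability of $(f_{n,+}/n)_{n\in\nn}$ via domination by an $L^1$-convergent Birkhoff sequence, invoking Vitali, and then Fatou on the negative parts. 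Your approach is more self-contained in that it does not revisit the Steele-interval machinery of Theorem~\ref{thm:as-K}; the paper's approach, on the other hand, recycles that machinery and handles the possibly non-integrable negative part through the cutoff rather than through a separate positive/negative-part decomposition.
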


\begin{proof}
	We prove the two inequalities behind the desired equality~\eqref{eq:avg-K} separately using a temporary cutoff depending on an arbitrarily small parameter~$\epsilon > 0$.
	\begin{description}
		\item[Step 1]
		Integrating~\eqref{eq:isu} and exploiting invariance of the measure, we define a sequence of integrals\footnote{Since we have assumed in Theorem~\ref{thm:as-K} that $f_{n,+}$ is integrable, the integral of $f_n$ is well defined in~$[-\infty, \infty)$, which suffices for Lemma~\ref{lem:gapped-F}} by
    $
      F_n := \int f_n \d\mu
    $
    and apply Lemma~\ref{lem:gapped-F}
    with the error terms defined by
    $
      R_n := \int \rho_n \d\mu
    $
    being $o(n)$ thanks to~\eqref{eq:L1-rho-cond}.
		This observation yields
		\begin{align*}
			\lim_{n\to\infty} \frac 1n \int f_n \d\mu
			&= \inf_{n\in\nn} \frac 1{n+\sigma_n} \left(\int f_n \d\mu + \int \rho_n \d\mu \right) \\
			&= \inf_{n\in\nn} \inf_{\epsilon > 0} \frac 1{n+\sigma_n} \left(\int \max\left\{f_n, -n\epsilon^{-1}\right\} \d\mu + \int \rho_n \d\mu \right) \\
			&= \inf_{\epsilon > 0} \inf_{n\in\nn} \frac 1{n+\sigma_n} \left(\int \max\left\{f_n, -n\epsilon^{-1}\right\} \d\mu + \int \rho_n \d\mu \right)
		\end{align*}
		in view of Lebesgue monotone convergence in~$\epsilon$ at fixed~$n$ and integrability of the positive part of~$f_n$. Now, using again Lemma~\ref{lem:gapped-F} to deal with the inner infimum on the right-hand side,
		\begin{align}
		\label{eq:epsilon-vs-n}
			\lim_{n\to\infty} \frac 1n \int f_n \d\mu
			&= \lim_{\epsilon \to 0} \lim_{n\to\infty} \frac 1{n} \int \max\left\{f_n, -n\epsilon^{-1}\right\} \d\mu.
		\end{align}
  \item[Step 2]
		At fixed~$\epsilon > 0$, Fatou's lemma applies with $-\epsilon^{-1}$ as an integrable lower bound, yielding
		\begin{align}
		\label{eq:K-S1}
			 \liminf_{n\to\infty} \frac 1{n} \int \max\left\{f_n, -n\epsilon^{-1}\right\} \d\mu
			&\geq  \int \max\left\{f, -\epsilon^{-1}\right\} \d\mu.
		\end{align}
    Taking the positive part of both sides of~\eqref{eq:isu} and using the obvious bound for the positive part of a sum, we see that the sequence $(f_{n,+})_{n\in\nn}$ of positive parts also satisfies~\eqref{eq:isu}. Therefore, the same argument and the fact that the limit inferior on the left-hand side can be written as an infimum guarantee that $f_+ \in L^1(\d\mu)$.

    \item[Step 3]
      To obtain the opposite bound, we argue separately for the negative and positive parts. For the negative parts, the Lebesgue dominated convergence theorem applies at fixed~$\epsilon > 0$ and yields
      \begin{align}
  		\label{eq:K-S2-neg}
  		 \lim_{n\to\infty} \frac 1{n} \int \max\left\{-f_{n,-}, -n\epsilon^{-1}\right\} \d\mu
  			&= \int \max\left\{-f_-, -\epsilon^{-1}\right\} \d\mu.
  		\end{align}
      By our previous comment on the positive parts, we may take the analogue of~\eqref{eq:ub-rep} for positive parts, integrate and divide by~$n$ in order to derive the bound
      \begin{align*}
        \frac 1n \int f_{n,+} \d\mu
        &\leq \int f_{+} \d\mu + \epsilon
        + \int_{D_{+}^{r,K,\epsilon}} (f_{r,+}
        + \rho_r) \d\mu
        + \frac{1}{n} \sum_{q = 1}^{Kr + \bar{\sigma}_{Kr}} \int f_{q,+} \d\mu.
      \end{align*}
      Taking $n \to \infty$,
      \begin{align*}
          \limsup_{n\to\infty} \frac 1n \int f_{n,+} \d\mu
          &\leq \int f_{+} \d\mu + \epsilon
          + \int_{D_{+}^{r,K,\epsilon}} (f_{r,+}
          + \rho_r) \d\mu.
      \end{align*}
      Taking $K\to\infty$ using Step~3 of the proof of the previous theorem applied to the positive parts, we find
      \begin{align}
      \label{eq:K-S2-pos}
          \limsup_{n\to\infty} \frac 1n \int f_{n,+} \d\mu
          &\leq \int f_{+} \d\mu + \epsilon.
      \end{align}
      Combining~\eqref{eq:K-S2-neg} and~\eqref{eq:K-S2-pos}, we find
      \begin{align}
      \label{eq:K-S2}
          \limsup_{n\to\infty} \frac 1n \int \max\left\{f_{n}, -n\epsilon^{-1}\right\} \d\mu
          &\leq \int \max\left\{f, -\epsilon^{-1}\right\} \d\mu + \epsilon.
      \end{align}
	\end{description}

  Combining~\eqref{eq:epsilon-vs-n} with~\eqref{eq:K-S1} and~\eqref{eq:K-S2}, we find
  \[
    \lim_{n\to\infty} \frac 1n \int f_n = \lim_{\epsilon \to 0} \int \max\left\{f, -\epsilon^{-1}\right\} \d\mu.
  \]
  Because $f_+$ has already been shown to be integrable, we now only need the Lebesgue monotone convergence theorem to conclude that
	\[
		 \lim_{\epsilon\to 0} \int \max\left\{f, -\epsilon^{-1}\right\} \d\mu = \int f \d\mu,
	\]
	and thus that the theorem holds.
\end{proof}

\begin{remark}
	While it is certainly natural to seek generalizations of the main results of this note along the lines of the passage from~(DS) to~(AS) in~\cite{Sc}, one must be aware that the gaps that are allowed here complicate the construction of the Steel-type intervals for a fixed~$x$ in Step~4 of the proof of Theorem~\ref{thm:as-K}. For example, it is already unclear to the author how to adapt the construction if one replaces~\eqref{eq:isu} with $f_{n+\sigma_m+m} \leq f_n + (f_m+\rho_m) \circ T^{n+\sigma_m}$. In the theory of large deviations for measures on one-sided shifts mentioned in the Introduction, the role of such ``exchanges of~$n$ and $m$'' is still not completely understood.
\end{remark}

\noindent
\paragraph*{Acknowledgements.} The author wishes to thank N.~Barnfield, G.~Cristadoro, N.~Cuneo, M.~Degli Esposti, V.~Jak\v{s}i\'{c} and J.~Soliman for stimulating discussions on the topic of this note. The author acknowledges financial support from the \emph{Natural Sciences and Engineering Research Council of Canada} and from the \emph{Fonds de recherche du Qu\'ebec\,---\,Nature et technologies}. A significant part of this work was done while the author was a post-doctoral researcher at CY Cergy Paris Universit\'e and supported by the LabEx MME-DII (\emph{Investissements d'Avenir} program of the French government).
\newpage

\end{document}